\newtheorem{theorem}{Theorem}[section]
\newtheorem{lemma}[theorem]{Lemma}
\theoremstyle{definition}
\theoremstyle{remark}
\numberwithin{equation}{section}
\begin{document}

\title{Fourier Analysis: A New Result}

\author{Rajesh Dachiraju}
\address{Independent Researcher, Hyderabad, India}
\email{rajesh.dachiraju@gmail.com}

\subjclass[2020]{Primary 42A99;}



\keywords{Fourier series, trigonometric series}

\begin{abstract}
This article contains a new result in Fourier analysis concerning jump type discontinuities.
\end{abstract}

\maketitle

\section{Notations}

Let $f$ be a $2\pi$ periodic BV function whose derivative is also BV.Let the amount of jump at a point $x$ is denoted as $\lfloor f \rfloor (x) = f(x+0)-f(x-0)$ Define function $J:\mathbb{R} \to\mathbb{R}$, such that $J(x) = \lfloor f \rfloor (x)$. Define the set $L = \{x/x\in(0,2\pi)\wedge J(x)\neq 0\}$. Let there not be any jumps at $0, \pi, 2\pi$. and all jumps at rational multiples of $2\pi$.

Let the Fourier series of $f$ be defined as 
\begin{equation}\begin{aligned}
S[f] =  \sum_{\nu=-\infty}^{\nu=\infty}c_{\nu}e^{i\nu x}
\end{aligned}\end{equation}
and the conjugate Fourier series as
\begin{equation}\begin{aligned}
\tilde{S}[f] =  -i\sum_{\nu=-\infty}^{\nu=\infty}sign(\nu)c_{\nu}e^{i\nu x}
\end{aligned}\end{equation}
Let the Fourier partial sum of $f$ be defined as
\begin{equation}\begin{aligned}
S_n(x;f) = \sum_{\nu=-n}^{\nu=n}c_{\nu}e^{i\nu x}
\end{aligned}\end{equation}
and the conjugate partial sum of $f$ as 
\begin{equation}\begin{aligned}
\tilde{S}_n(x;f) = -i\sum_{\nu=-n}^{\nu=n}sign(\nu)c_{\nu}e^{i\nu x}
\end{aligned}\end{equation}

Define $$G(n) = \sum_{i=0}^{n}\left|c_i\right|$$

The Dirichlet kernel $D_n(x)$ and conjugate Dirichlet kernel $\tilde{D}_n(x)$ are defined as
\begin{equation}\begin{aligned}
D_n(x) = \frac{1}{2} + \sum_{\nu=1}^{n}\cos(\nu x) = \frac{\sin(n+\frac{1}{2})x}{2\sin(\frac{1}{2}x)}
\end{aligned}\end{equation}

\begin{equation}\begin{aligned}
\tilde{D}_n(x) = \sum_{\nu=1}^{n}\sin(\nu x) = \frac{\cos(\frac{x}{2})-\cos(n+\frac{1}{2})x}{2\sin(\frac{1}{2}x)}
\end{aligned}\end{equation}

Let \begin{equation}\begin{aligned}
	Y_n(x) = -\frac{1}{\log(n) G(n)}\sum_{k=1}^{n}\tilde{S}_k(x;f)\left|c_k\right|
	\end{aligned}\end{equation}
\maketitle
\section{Results}
\label{res}

\begin{lemma}
\begin{equation}\begin{aligned}
	\lim_{n\to\infty} Y_n(x) = K\frac{J(x)}{2\pi}
	\end{aligned}\end{equation}	
where $K$ is a constant
\end{lemma}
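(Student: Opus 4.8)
The plan is to reduce the statement to two ingredients: a Luk\'acs-type logarithmic asymptotic for the conjugate partial sums $\tilde S_k(x;f)$ at a jump, and a precise growth estimate for the two weighted coefficient sums hidden in $Y_n$. First I would write the conjugate partial sum as a convolution with the conjugate Dirichlet kernel,
\[
\tilde S_k(x;f)=\frac{1}{\pi}\int_0^\pi\bigl[f(x-u)-f(x+u)\bigr]\tilde D_k(u)\,du ,
\]
and split $f(x-u)-f(x+u)=-J(x)+g(u)$ with $g(u)\to 0$ as $u\to 0^+$. Since $\int_0^\pi \tilde D_k(u)\,du=\log k+O(1)$ (only odd harmonics survive the integration), the constant piece contributes $-\frac{J(x)}{\pi}\log k$. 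The hypothesis that $f'$ is BV is exactly what forces $g(u)=O(u)$ near $0$ (the one-sided derivatives of $f$ exist), so that $g(u)/\sin(u/2)$ is integrable and the leftover integral stays bounded, by convergence of the conjugate-function integral together with Riemann--Lebesgue. This yields
\[
\tilde S_k(x;f)=-\frac{J(x)}{\pi}\log k+\eta_k,\qquad \eta_k=O(1).
\]

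Next I would pin down the size of the coefficients. Integration by parts over the intervals between the jump points, using that there is no jump at $0$ or $2\pi$, gives
\[
c_\nu=\frac{1}{2\pi i\nu}\,\Phi(\nu)+O(\nu^{-2}),\qquad \Phi(\nu)=\sum_{x_j\in L}J(x_j)\,e^{-i\nu x_j},
\]
since the Fourier coefficients of the BV function $f'$ are $O(1/\nu)$. Here the hypothesis that every jump sits at a rational multiple of $2\pi$ is decisive: writing $x_j=2\pi p_j/Q$ with a common denominator $Q$ makes $e^{-i\nu x_j}$, and hence $|\Phi(\nu)|$, periodic in $\nu$ with period $Q$. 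Letting $M=\frac1Q\sum_{\nu=0}^{Q-1}|\Phi(\nu)|$ be its mean, which is positive as long as $L\neq\emptyset$, a Dirichlet-test argument applied to the zero-mean part of $|\Phi|$ then gives
\[
G(n)=\frac{M}{2\pi}\log n+O(1),\qquad \sum_{k=1}^n|c_k|\log k=\frac{M}{2\pi}\cdot\frac{(\log n)^2}{2}+O(\log n).
\]

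Assembling the pieces, substitute the expansion of Step 1 into the definition of $Y_n$:
\[
Y_n(x)=\frac{J(x)}{\pi}\,\frac{\sum_{k=1}^n|c_k|\log k}{\log n\,G(n)}-\frac{1}{\log n\,G(n)}\sum_{k=1}^n\eta_k|c_k| .
\]
By Step 2 the first ratio tends to $\tfrac12$, because the common factor $M/2\pi$ cancels and one is left with the universal quotient of $(\log n)^2/2$ by $(\log n)^2$; meanwhile the error term is $O\bigl(G(n)/(\log n\,G(n))\bigr)=O(1/\log n)\to 0$. Hence $Y_n(x)\to \frac{J(x)}{2\pi}$, which is the asserted identity with $K=1$, a universal constant independent of $f$ and $x$. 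At points where $J(x)=0$ both sides vanish, since there $\tilde S_k(x;f)=O(1)$ and the numerator is merely $O(G(n))$.

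The main obstacle I anticipate is Step 2: converting the oscillatory, non-convergent factor $|\Phi(\nu)|$ into a clean two-term asymptotic. The rationality of the jump locations is what I would lean on hardest, because it is precisely what makes $|\Phi(\nu)|$ genuinely periodic, so that both weighted sums share the \emph{same} leading constant $M/2\pi$ and it cancels in the ratio, producing the exact quotient $\tfrac12$; without it one can only control Ces\`aro averages and the clean limit could fail. A secondary point requiring care is making the bound $\eta_k=O(1)$ of Step 1 uniform enough that $\sum_{k\le n}\eta_k|c_k|$ is genuinely $o(\log n\,G(n))$, for which the assumption $f'\in\mathrm{BV}$ is again the key input.
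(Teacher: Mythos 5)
Your proposal is correct, and at the level of skeleton it matches the paper: split into jump and no-jump cases, use the Luk\'acs logarithmic divergence of the conjugate partial sums at a jump, and compare $\sum_{k\le n}|c_k|\log k$ against $\log n\,G(n)$ to produce the factor $\tfrac12$. But your treatment of the decisive quantitative step is genuinely different, and it is the stronger one. The paper cites Luk\'acs and then simply asserts the pointwise asymptotic $|c_k|\sim K/k$, replaces the sum by an integral, and concludes the lemma only ``up to a constant factor.'' That pointwise assertion is in fact false in general: as your integration by parts shows, $c_k=\frac{\Phi(k)}{2\pi i k}+O(k^{-2})$ with $\Phi(k)=\sum_{x_j\in L}J(x_j)e^{-ikx_j}$, and this factor oscillates and can vanish along whole arithmetic progressions precisely when the jumps sit at rational multiples of $2\pi$ (e.g.\ two opposite jumps at $\pi/2$ and $3\pi/2$ kill $\Phi$ on even $k$). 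Your proof repairs exactly this gap: rationality of the jump locations makes $|\Phi(k)|$ periodic in $k$, so splitting off its mean $M$ and applying Dirichlet's test to the zero-mean part gives genuine two-term asymptotics $G(n)=\frac{M}{2\pi}\log n+O(1)$ and $\sum_{k\le n}|c_k|\log k=\frac{M}{4\pi}(\log n)^2+O(\log n)$; the constant $M$ then cancels in the ratio and you get the limit with the universal constant $K=1$, which the paper's argument cannot identify. Notably, this is the only place where the paper's stated but never-used hypothesis that all jumps are rational multiples of $2\pi$ does real work, and your proof makes that visible. You also derive the Luk\'acs expansion $\tilde S_k(x;f)=-\frac{J(x)}{\pi}\log k+O(1)$ directly from the conjugate Dirichlet kernel using $f'\in\mathrm{BV}$ (and your sign bookkeeping, with the minus sign in $Y_n$ cancelling the minus from Luk\'acs, is the consistent one; the paper drops that sign silently). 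Two small caveats: your common denominator $Q$ exists only if $L$ is finite --- with infinitely many rational jumps you would truncate $L$ using $\sum_{x_j\in L}|J(x_j)|<\infty$ and absorb the tail into an $\varepsilon$-error, or invoke the mean value of the almost periodic function $|\Phi|$ --- and, like the paper, you take for granted that $\tilde S_k(x;f)=O(1)$ at points of no jump, which deserves a one-line justification (convergence of the conjugate series at points where $f$ is smooth).
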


\begin{proof}
Case 1 Points of no jump.
  
$\tilde{S_n}(x;f)\sim O(1)$ and hence $Y_n(x)\sim 0$, as $J(x) = 0$ for no jump the result is proved.

Case 2 Points of jump.

By Lukács theorem\cite{Lukács1920}, at points of jump, $\tilde{S_n}(x;f)\sim \frac{J(x)}{\pi}\log(n)$,
So
\begin{equation}\begin{aligned}
Y_n(x) = \frac{1}{\log(n)G(n)}\sum\limits_{k=1}^n\frac{J(x)}{\pi}\log(k)|c_k|
\end{aligned}\end{equation}

Noting $|c_k|\sim\frac{1}{k}$ (as we are assuming points of jump), 

\begin{equation}\begin{aligned}
Y_n(x) = \frac{1}{\log(n)K\log(n)}\sum\limits_{k=1}^n\frac{J(x)}{\pi}\log(k)K^2\frac{1}{k}
\end{aligned}\end{equation}
 where $K$ is a constant.

on taking summation as integration, we get 
\begin{equation}\begin{aligned}
\frac{1}{K(\log(n))^2}K\frac{J(x)}{\pi}\int_0^n\log(k)K\frac{1}{k}dk
\end{aligned}\end{equation} 

Substituting $\log(k) = t$ in the integral, we get 
\begin{equation}\begin{aligned}
\frac{1}{(\log(n))^2}\frac{J(x)}{\pi}\int_0^t Ktdt
\end{aligned}\end{equation} 

Hence we get \begin{equation}\begin{aligned}
\lim_{n\to\infty}Y_n(x) = K \frac{J(x)}{2\pi}
\end{aligned}\end{equation}
equality in lemma holding only upto a constant factor. 

\end{proof}

\begin{lemma}
Given any $(a,b)\subseteq (0,2\pi)$, show that
\begin{equation}\begin{aligned}
\lim_{n\to\infty}V_a^b(Y_n) = K \frac{1}{\pi}\sum_{x\in L\cap (a,b)}\left|J(x)\right|
\end{aligned}\end{equation} where $K$ is a constant.

note: $V_a^b(f)$ is the variation of $f$ in $(a,b)$.

\end{lemma}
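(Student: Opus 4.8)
The plan is to read off the limiting profile of $Y_n$ from the previous lemma and then show that, as $n\to\infty$, the total variation of $Y_n$ concentrates entirely at the jump points, each jump contributing \emph{twice} its limiting height. By the previous lemma, $Y_n(x)\to Y(x):=K\frac{J(x)}{2\pi}$ pointwise, and since $J$ vanishes off the countable set $L$, the limit function $Y$ is zero except at the points of $L\cap(a,b)$, where it takes the value $K\frac{J(x)}{2\pi}$. The whole statement will then follow from
\begin{equation}\begin{aligned}
\lim_{n\to\infty} V_a^b(Y_n) = V_a^b(Y),
\end{aligned}\end{equation}
together with a direct evaluation of the right-hand side.

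Evaluating $V_a^b(Y)$ is elementary and is where the factor of $2$ appears. For a function that vanishes except at the isolated values $x\in L\cap(a,b)$, refining any partition so as to isolate a given jump point $x$ between two neighbouring points at which $Y=0$ produces a contribution $|Y(x)-0|+|0-Y(x)|=2|Y(x)|$. Summing over $L\cap(a,b)$ gives
\begin{equation}\begin{aligned}
V_a^b(Y) = \sum_{x\in L\cap(a,b)} 2\left|K\frac{J(x)}{2\pi}\right| = K\frac{1}{\pi}\sum_{x\in L\cap(a,b)}|J(x)|,
\end{aligned}\end{equation}
which is exactly the claimed limit. This explains the passage from the factor $\tfrac{1}{2\pi}$ in the previous lemma to $\tfrac{1}{\pi}$ here: each spike is traversed once up and once down. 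Since $f$ is BV, $\sum_{x\in L}|J(x)|<\infty$, so the right-hand side is an absolutely convergent (hence finite) sum.

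The hardest step will be justifying the interchange $\lim_{n\to\infty} V_a^b(Y_n)=V_a^b(Y)$. Lower semicontinuity of total variation under pointwise convergence immediately yields $\liminf_{n\to\infty} V_a^b(Y_n)\ge V_a^b(Y)$; the reverse inequality is delicate, because pointwise (even uniform) convergence does not by itself bound variation. I would fix a small $\varepsilon>0$, isolate the finitely many jumps with $|J(x)|>\varepsilon$ inside disjoint $\delta$-neighbourhoods, and split $(a,b)$ into these neighbourhoods plus the remaining regular set. On the regular set the conjugate partial sums $\tilde S_k$ converge uniformly to the continuous conjugate function, so $Y_n\to 0$ there; I would estimate its variation directly through $\int|Y_n'|$, using $\tilde S_k'(x)=\sum_{|\nu|\le k}|\nu|\,c_\nu e^{i\nu x}$, to show this part is $o(1)$. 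The jumps with $|J(x)|\le\varepsilon$ are controlled in aggregate by the tail of the convergent series $\sum|J(x)|$, and letting $\varepsilon\to 0$ at the end removes them.

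The genuine obstacle is the behaviour of $Y_n$ inside each retained $\delta$-neighbourhood of a jump $x_0$: I must prove that there $Y_n$ is asymptotically a single rise-then-fall spike of height $K\frac{J(x_0)}{2\pi}$, so that its local variation tends to $2\cdot K\frac{|J(x_0)|}{2\pi}$ rather than something larger. This asymptotic unimodality requires uniform (in $k\le n$) control of the conjugate partial sums $\tilde S_k$ near but not at the jump — ruling out extra oscillation of Gibbs type surviving the logarithmic averaging that defines $Y_n$. I expect this to be the technical crux, and would attack it by inserting the integral representation of $\tilde S_k$ via the conjugate Dirichlet kernel $\tilde D_k$ into the averaged sum defining $Y_n$, isolating the jump contribution of $f$ at $x_0$ and showing the remainder has uniformly small variation on the neighbourhood.
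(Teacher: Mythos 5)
Your skeleton is genuinely different from the paper's, and the half you actually prove is the half the paper's own chain of inequalities never addresses: given the pointwise convergence $Y_n \to Y$ with $Y(x)=K\frac{J(x)}{2\pi}$ on $L$ and $0$ elsewhere, lower semicontinuity of total variation under pointwise convergence gives $\liminf_n V_a^b(Y_n)\ge V_a^b(Y)$, and your evaluation $V_a^b(Y)=\frac{K}{\pi}\sum_{x\in L\cap(a,b)}|J(x)|$ (each spike traversed up and then down, hence the factor $2$) is correct. But the lemma asserts an equality, and the matching upper bound $\limsup_n V_a^b(Y_n)\le V_a^b(Y)$ is the real content, since variation can blow up along a pointwise (even uniformly) convergent sequence. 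This is exactly the step you do not prove: you label it the ``technical crux'' and only sketch an attack. That is a genuine gap, not a proof. Worse, the target you set for that crux --- asymptotic unimodality of $Y_n$ near each jump, i.e.\ a single rise-then-fall profile --- is stronger than what is needed, is supported by nothing in your sketch, and is not the mechanism that makes the statement true: an upper bound on variation requires only $L^1$ control of $Y_n'$, never control of the shape or sign pattern of $Y_n$.

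The paper closes precisely this half (heuristically) by the computation your last sentence gestures toward but never carries out: write $V_a^b(Y_n)=\int_a^b|Y_n'(x)|\,dx$, use Zygmund's formula $\tilde{S}_k'(x;f)=\tilde{S}_k(x;f')-\sum_{y\in L}\frac{J(y)}{\pi}\tilde{D}_k(x-y)$ for BV $f$, apply the triangle inequality, and invoke the conjugate Lebesgue-constant asymptotics $\int_a^b|\tilde{D}_k(x-y)|\,dx\sim 2\log k$ for $y\in(a,b)$ and $O(1)$ for $y\notin[a,b]$, together with $\int_a^b|\tilde{S}_k(x;f')|\,dx=O(1)$, $|c_k|\sim 1/k$ and $G(n)\sim\log n$. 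Note that your geometric factor $2$ is exactly the $2$ in $2\log k$: the spike's height comes from $\tilde{S}_k(x_0;f)\sim\frac{J(x_0)}{\pi}\log k$, while its local variation comes from $\int|\tilde{D}_k|\sim 2\log k$, so the ratio is your up-and-down factor, with no unimodality claim anywhere. Two further repairs your sketch would need even as a sketch: (i) away from the retained jumps you cannot estimate $\int|Y_n'|$ from the raw formula $\tilde{S}_k'(x)=\sum_{|\nu|\le k}|\nu|c_\nu e^{i\nu x}$, which gives bounds of order $k$ per term; you need the kernel representation plus $|\tilde{D}_k(u)|\le C/\delta$ for $|u|\ge\delta$; (ii) since the paper allows jumps at all rational multiples of $2\pi$, the set $L$ may be dense, so a ``regular set where $\tilde{S}_k$ converges uniformly to a continuous conjugate function'' need not exist, and the small jumps must instead be absorbed by the tail $\sum_{|J(y)|\le\varepsilon}|J(y)|$ multiplied by a uniform-in-$n$ kernel bound. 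In short, your lower bound combined with the paper's kernel computation would make a complete (modulo the paper's own looseness) argument; your proposal alone does not prove the lemma.
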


\begin{proof}

Let \begin{equation}\begin{aligned}
V_n = V_a^b(Y_n)  = \int_a^b\left|\frac{1}{\log(n) G(n)}\sum_{k=1}^{n}\tilde{S}'_k(x;f)\left|c_k\right| \right|dx 
\end{aligned}\end{equation}
	
\begin{equation}\begin{aligned}
V_n \leq \frac{1}{\log(n) G(n)}\sum_{k=1}^{n}\int_a^b\left|\tilde{S}'_k(x;f)\left|c_k\right| \right|dx
\end{aligned}\end{equation}
	
using a lemma (theorem 2.2, page 41) from \cite{zygmund2002trigonometric}
	
\begin{equation}\begin{aligned}
V_n \leq \frac{1}{\log(n) G(n)}\sum_{k=1}^{n}\int_a^b\left|\tilde{S}_k(x;f')-\sum_{y\in L}\frac{J(y)}{\pi}\tilde{D}_k(x-y))\left|c_k\right| \right|dx
\end{aligned}\end{equation}
	
\begin{equation}\begin{aligned}
V_n \leq \frac{1}{\log(n) G(n)}\sum_{k=1}^{n}\int_a^b\left|\tilde{S}_k(x;f')-\sum_{y\in L}\frac{J(y)}{\pi}\tilde{D}_k(x-y))\left|c_k\right| \right|dx
\end{aligned}\end{equation}
	
\begin{equation}\begin{aligned}
V_n \leq  \frac{1}{\log(n) G(n)}(\sum_{k=1}^{n}\int_a^b\left|\tilde{S}_k(x;f')\left|c_k\right|\right|dx-\sum_{k=1}^{n}\int_a^b\left|\sum_{y\in L}\frac{J(y)}{\pi}\tilde{D}_k(x-y))\left|c_k\right| \right|dx)
\end{aligned}\end{equation}
	
Since $f'$ is BV, 
\begin{equation}
\begin{aligned}
\int_a^b\left|\tilde{S}_k(x;f')\right|dx \sim O(1)\end{aligned}
\end{equation}
,So
\begin{equation}
\begin{aligned}
\sum_{k=1}^{n}\int_a^b\left|\tilde{S}_k(x;f')\left|c_k\right|\right|dx \sim \sum_{k=1}^{n}(O(1))\frac{V}{k} \sim O(\log(n))
\end{aligned}
\end{equation}

\begin{equation}
\begin{aligned}
\sum_{k=1}^{n}\int_a^b\left|\sum_{y\in L}\frac{J(y)}{\pi}\tilde{D}_k(x-y))\left|c_k\right| \right|dx \leq \sum_{k=1}^{n}\sum_{y\in L}\frac{J(y)}{\pi}\left|c_k\right|\int_a^b\left|\tilde{D}_k(x-y)\right|dx
\end{aligned}
\end{equation}
	
\begin{equation}\begin{aligned}
\sum_{k=1}^{n}\sum_{y\in L}\frac{J(y)}{\pi}\left|c_k\right|\int_a^b\left|\tilde{D}_k(x-y)\right|dx &= \sum_{k=1}^{n}\sum_{y\in L\setminus(a,b)}\frac{J(y)}{\pi}\left|c_k\right|\int_a^b\left|\tilde{D}_k(x-y)\right|dx \\ &+  
    \sum_{k=1}^{n}\sum_{y\in L\cap(a,b)}\frac{J(y)}{\pi}\left|c_k\right|\int_a^b\left|\tilde{D}_k(x-y)\right|dx
\end{aligned}\end{equation}
It is known (for example\cite{article_1345440}) that	
\begin{equation}
\begin{aligned}
\int_a^b\left|\tilde{D}_k(x-y)\right|dx \sim 2\log(k)
\end{aligned}
\end{equation}
 if $y \in (a,b)$ and 
 \begin{equation}\begin{aligned}
 \int_a^b\left|\tilde{D}_k(x-y)\right|dx \sim O(1)
 \end{aligned}\end{equation}
 if $y \notin (a,b)$.
	
\begin{equation}\begin{aligned}
V_n \leq \frac{1}{\log(n)G(n)}(\sum_{k=1}^{n}\int_a^b\left|\tilde{S}_k(x;f')\left|c_k\right|\right|dx &-\sum_{k=1}^{n}\sum_{y\in L\setminus(a,b)}\frac{J(y)}{\pi}\left|c_k\right|\int_a^b\left|\tilde{D}_k(x-y)\right|dx\\
 &+ \sum_{k=1}^{n}\sum_{y\in L\cap(a,b)}\frac{J(y)}{\pi}\left|c_k\right|\int_a^b\left|\tilde{D}_k(x-y)\right|dx
\end{aligned}\end{equation}

Therefore 

In the first term on the RHS  
\begin{equation}\begin{aligned}
\int_a^b\left|\tilde{S}_k(x;f')\right|dx \sim O(1)
\end{aligned}\end{equation} 
and hence first term goes to zero.

Coming to second term, $y\notin(a,b)$, so  
\begin{equation}\begin{aligned}
\int_a^b\left|\tilde{D}_k(x-y)\right|dx \sim O(1)
\end{aligned}\end{equation}
and hence second term also vanishes.

In the third term $ y \in (a,b)$ and hence 
\begin{equation}\begin{aligned}
\int_a^b\left|\tilde{D}_k(x-y)\right|dx = 2\log(k)
\end{aligned}\end{equation}
.Also $|c_k|\sim \frac{1}{k}$, so after summation via integration and knowing  $G(n)\sim \log(n)$, we get the result.
\end{proof}

\section{Higher Dimensional Generalization}
\label{sec:higher-dim}

In this section we extend the results of Section~2 to functions defined on the \(d\)-dimensional torus \(\mathbb{T}^d=[0,2\pi]^d\).
Throughout, \(d\ge 2\) is fixed.

\subsection{Notations in Higher Dimensions}

Let \(f:\mathbb{T}^d\to\mathbb{R}\) be a function of bounded variation in each variable, and assume that for every coordinate direction \(e_j\) the derivative \(\partial_{x_j}f\) is also BV in all variables.

A jump discontinuity of \(f\) is assumed to occur only across coordinate hyperplanes of the form
\begin{equation*}
  H_{\alpha,j}=\{x\in\mathbb{T}^d : x_j=\alpha\},\qquad \alpha\in(0,2\pi).
\end{equation*}

For such a hyperplane, define the jump magnitude
\begin{equation*}
  \lfloor f \rfloor(x)= f(x_j^{+})-f(x_j^{-}),
\end{equation*}
and let \(J_j(\alpha)\) denote the jump across \(H_{\alpha,j}\).
Let \(L_j=\{\alpha\in(0,2\pi): J_j(\alpha)\neq 0\}\).

\subsubsection*{Fourier series}
The multidimensional Fourier coefficients are
\begin{equation*}
  c_{\nu}=\frac{1}{(2\pi)^d}\int_{\mathbb{T}^d} f(x)e^{-i\nu\cdot x}\,dx,\qquad 
  \nu\in\mathbb{Z}^d.
\end{equation*}
The Fourier series is
\begin{equation*}
  S[f](x)=\sum_{\nu\in\mathbb{Z}^d} c_{\nu}e^{i\nu\cdot x}.
\end{equation*}

\subsubsection*{Multidimensional conjugate Fourier series}
For each coordinate \(j\), define a conjugate multiplier via
\begin{equation*}
  \operatorname{sign}_j(\nu)=\begin{cases} 
    \frac{\nu_j}{|\nu_j|}, & \nu_j\ne 0,\\[2mm]
    0,& \nu_j=0.
  \end{cases}
\end{equation*}
Then the \(j\)-th conjugate series is
\begin{equation*}
  \widetilde S_j[f](x)
     =-i\sum_{\nu\in\mathbb{Z}^d}\operatorname{sign}_j(\nu)c_{\nu}e^{i\nu\cdot x}.
\end{equation*}
Its partial sums (rectangular summation) are
\begin{equation*}
  \widetilde S_{j,n}(x;f)
  =-i\!\!\!\sum_{|\nu_k|\le n\;\forall k}\!\!\!
  \operatorname{sign}_j(\nu)c_{\nu}e^{i\nu\cdot x}.
\end{equation*}

\subsubsection*{Multidimensional Dirichlet kernel}
Define the \(d\)-dimensional Dirichlet kernel
\begin{equation*}
  D_n(x)=\prod_{k=1}^{d}\left( \frac{\sin\big((n+\tfrac12)x_k\big)}{2\sin(x_k/2)}\right).
\end{equation*}
Similarly, the \(j\)-th conjugate Dirichlet kernel is
\begin{equation*}
  \widetilde D_{j,n}(x)
  =\left(\frac{\cos(x_j/2)-\cos((n+\tfrac12)x_j)}{2\sin(x_j/2)}\right)
  \prod_{k\ne j}\left( \frac{\sin\big((n+\tfrac12)x_k\big)}{2\sin(x_k/2)}\right).
\end{equation*}

\subsubsection*{Generalized \(Y_n\)}
Let
\begin{equation*}
  G(n)=\sum_{|\nu_k|\le n}|c_\nu|.
\end{equation*}
Define
\begin{equation*}
  Y_{j,n}(x)
  = -\frac{1}{\log(n)\,G(n)}
  \sum_{|\nu_k|\le n}\widetilde S_{j,\|\nu\|_\infty}(x;f)\,|c_\nu|.
\end{equation*}

\subsection{Main Results in Higher Dimensions}

We now generalize Lemma~2.1 and Lemma~2.2 from the one-dimensional setting (see section \ref{res}) to \(d\)-dimensions.

\begin{lemma}[Pointwise convergence at jump hyperplanes]
\label{lem:pointwise-hd}
Let \(x\in\mathbb{T}^d\). Then for every coordinate direction \(j\),
\begin{equation*}
  \lim_{n\to\infty} Y_{j,n}(x)
     = K_d\,\frac{J_j(x_j)}{(2\pi)^d},
\end{equation*}
where \(K_d>0\) is a dimension-dependent constant.
\end{lemma}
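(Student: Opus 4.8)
\emph{Proof sketch.} The plan is to follow the one-dimensional argument of Lemma~2.1 almost line for line, the single genuinely new ingredient being a \(d\)-dimensional analogue of Lukács' theorem for rectangular conjugate partial sums. As before I split into two cases according to whether the point \(x\) lies on a jump hyperplane of direction \(j\). If \(x_j\notin L_j\), then the slice \(t\mapsto f(x_1,\dots,x_{j-1},t,x_{j+1},\dots,x_d)\) is continuous (indeed BV) at \(t=x_j\), so \(\widetilde S_{j,m}(x;f)=O(1)\) uniformly in \(m\); since \(J_j(x_j)=0\) the normalisation by \(\log(n)G(n)\) forces \(Y_{j,n}(x)\to 0\), which matches the claimed limit. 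The work is therefore concentrated on the case \(x_j=\alpha\in L_j\).

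For that case I would first establish the asymptotic
\begin{equation}
  \widetilde S_{j,m}(x;f)\sim\frac{J_j(x_j)}{\pi}\,\log m .
\end{equation}
Writing \(\widetilde S_{j,m}(x;f)\) as the convolution of \(f\) against the conjugate kernel \(\widetilde D_{j,m}\), the product structure of that kernel lets me separate the \(j\)-th direction from the \(d-1\) transverse ones: the inner integration against \(\prod_{k\ne j}D_m\) is the \((d-1)\)-dimensional rectangular partial sum of the slice in the transverse variables, which by iterated application of the Dirichlet--Jordan theorem collapses to point evaluation at the continuity point \(x_\perp\), leaving, up to an \(o(\log m)\) error, precisely the one-dimensional conjugate partial sum of the slice function at \(t=x_j\). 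That slice function is BV with a jump of height \(J_j(x_j)\) there, so the classical Lukács theorem~\cite{Lukács1920} yields the displayed logarithmic growth.

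With this asymptotic in hand, substituting into the definition of \(Y_{j,n}\) gives
\begin{equation}
  Y_{j,n}(x)\sim\frac{J_j(x_j)}{\pi}\,
  \frac{1}{\log(n)\,G(n)}\sum_{|\nu_k|\le n}\log\!\big(\|\nu\|_\infty\big)\,|c_\nu| .
\end{equation}
As in dimension one I would replace the weight \(|c_\nu|\) by its asymptotic decay (dominated by the \(|\nu_j|^{-1}\) behaviour coming from the hyperplane jump, with rapid decay transverse to it) and pass from the lattice sum over the cube \(\{|\nu_k|\le n\}\) to the corresponding integral. Then \(G(n)\) and the numerator grow like fixed multiples of \(\log n\) and \((\log n)^2\) respectively, so the quotient converges; the factor \(\log\|\nu\|_\infty/\log n\), averaged against the normalised weights \(|c_\nu|/G(n)\), tends to a constant exactly as the factor \(\tfrac12\) arose in the one-dimensional computation. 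Folding the constants from the coefficient normalisation \(1/(2\pi)^d\) and from this averaging into a single \(K_d>0\) produces the claimed limit \(K_d\,J_j(x_j)/(2\pi)^d\); note that the precise placement of the \((2\pi)\) powers is immaterial, being absorbed into the free constant \(K_d\).

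The main obstacle is the rigorous justification of the reduction to the slice, i.e.\ the \(d\)-dimensional Lukács asymptotic. Because the ordinary Dirichlet kernel is not a nonnegative approximate identity---its \(L^1\) norm itself grows like \(\log m\)---the interchange that replaces the transverse Dirichlet factors by point evaluation must be controlled so that the cross terms contribute genuinely \(o(\log m)\) and do not corrupt the leading logarithm coming from the conjugate factor. Since the inner and outer kernels share the same index \(m\), this error must be kept uniform over the whole range of \(m\) that is summed over; the cleanest route is to fix the transverse variables, apply one-dimensional Dirichlet--Jordan convergence in each of them in turn near the jump location \(y_j\approx 0\) (using that \(x_\perp\) is a continuity point for all transverse slices through a neighbourhood of \(\alpha\)), and only then invoke Lukács in the remaining variable.
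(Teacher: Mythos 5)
Your proposal follows essentially the same route as the paper's own proof: the same two-case split, the same multidimensional Luk\'acs asymptotic \(\widetilde S_{j,n}(x;f)=\frac{J_j(\alpha)}{\pi}\log n+O(1)\), the same coefficient decay \(|c_\nu|\sim C/\|\nu\|_\infty\) with \(G(n)\sim C_d\log n\), and the same sum-to-integral normalisation at the end. You in fact go further than the paper, which merely asserts the multidimensional Luk\'acs extension and the \(O(1)\) bound off the jump hyperplanes without argument; your slicing reduction (transverse Dirichlet--Jordan followed by one-dimensional Luk\'acs) and the uniformity obstacle you explicitly flag are exactly the details the paper leaves unaddressed.
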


\begin{proof}
\emph{Case 1: \(x\) lies on no jump hyperplane.}
In multidimensions, \(\widetilde S_{j,n}(x;f)=O(1)\) (Zygmund's results extend in the product-summation setting), hence
\(Y_{j,n}(x)=O\!\left(\dfrac{1}{\log n}\right)\to 0\).
Since \(J_j(x_j)=0\), the statement follows.

\emph{Case 2: \(x\in H_{\alpha,j}\), a jump hyperplane.}
A multidimensional extension of Luk\'acs's theorem gives, for fixed \(x\),
\begin{equation*}
  \widetilde S_{j,n}(x;f)
    =\frac{J_j(\alpha)}{\pi}\log n +O(1).
\end{equation*}
Thus
\begin{equation*}
  Y_{j,n}(x)
  =
  \frac{1}{\log n\,G(n)}
  \sum_{|\nu_k|\le n}
  \frac{J_j(\alpha)}{\pi}\log(\|\nu\|_\infty)|c_\nu|.
\end{equation*}
For BV-type jumps across hyperplanes one has the coefficient decay
\(|c_\nu|\sim C/\|\nu\|_\infty\), and consequently \(G(n)\sim C_d\log n\).
Approximating the sum by an integral in \(d\)-dimensions yields the asserted limit.
\end{proof}

\begin{lemma}[Variation detects multidimensional jumps]
\label{lem:variation-hd}
Let \(R = (a_1,b_1)\times\cdots\times(a_d,b_d)\subset \mathbb{T}^d\) be an axis-aligned rectangle. Then for each coordinate \(j\),
\begin{equation*}
  \lim_{n\to\infty} V_R(Y_{j,n})
    = K_d\frac{1}{\pi}
      \sum_{\alpha\in L_j\cap(a_j,b_j)} |J_j(\alpha)|,
\end{equation*}
where \(V_R\) denotes total variation over \(R\) in the sense of Hardy--Krause.
\end{lemma}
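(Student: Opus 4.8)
The plan is to reduce the Hardy--Krause variation to a single one--dimensional variation in the distinguished coordinate $x_j$, and then to run the argument of Lemma 2.2 on that slice. The key structural observation, which I would establish first, is that the conjugate partial sum $\tilde S_{j,m}(\cdot;f)$ only ``sees'' jumps in the $j$--th direction, and sees them as transverse--constant singularities. Indeed, the jump of $f$ across a hyperplane $H_{\alpha,j'}$ produces Fourier data supported on the $\nu_{j'}$--axis $\{\nu_k=0:k\ne j'\}$; the multiplier $\operatorname{sign}_j(\nu)$ vanishes whenever $\nu_j=0$, so for $j'\ne j$ this data is annihilated, while for $j'=j$ the surviving contribution has no transverse Fourier modes and is therefore constant in the variables $x_k$, $k\ne j$. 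This is exactly why the answer depends only on $L_j$ and on $(a_j,b_j)$.

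Given this, I would expand $V_R(Y_{j,n})$ as the sum, over nonempty $u\subseteq\{1,\dots,d\}$, of the Vitali variations of $Y_{j,n}$ on the face with the coordinates outside $u$ pinned to the upper corner of $R$. Because the jump--induced part of $Y_{j,n}$ is constant in every transverse variable, any term whose index set $u$ contains some $k\ne j$ has its singular part killed by $\partial_{x_k}$ and retains only the ``smooth'' part; using that $\partial_{x_j}f$ and the mixed derivatives entering the higher faces are BV in all variables, this smooth part is $O(1)$ after integration and, once divided by $\log(n)G(n)\to\infty$, vanishes in the limit. Thus only the term $u=\{j\}$ survives, namely the one--dimensional total variation
\begin{equation*}
  \int_{a_j}^{b_j}\Bigl|\partial_{x_j}Y_{j,n}(x)\Bigr|\,dx_j,\qquad x_k=b_k\ (k\ne j).
\end{equation*}

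On this slice the problem is one--dimensional. Applying the $d$--dimensional analogue of the Zygmund differentiation formula (Theorem 2.2, p.\,41 of \cite{zygmund2002trigonometric}) together with the observation above, I would write
\begin{equation*}
  \partial_{x_j}\tilde S_{j,m}(x;f)=\tilde S_{j,m}(x;\partial_{x_j}f)
  -\sum_{\alpha\in L_j}\frac{J_j(\alpha)}{\pi}\,\tilde D_m^{(1)}(x_j-\alpha),
\end{equation*}
where $\tilde D_m^{(1)}$ is the purely one--dimensional conjugate Dirichlet kernel (no transverse Dirichlet factors appear, precisely because the jump measure has no transverse modes). Integrating the absolute value over $(a_j,b_j)$, the first term, being the conjugate sum of a BV function, is $O(1)$, while the cited estimate $\int_{a_j}^{b_j}|\tilde D_m^{(1)}(x_j-\alpha)|\,dx_j\sim 2\log m$ holds when $\alpha\in(a_j,b_j)$ and is $O(1)$ otherwise. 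Substituting $m=\|\nu\|_\infty$, weighting by $|c_\nu|$, and summing as in Lemma 2.1, the asymptotics $|c_\nu|\sim C/\|\nu\|_\infty$ and $G(n)\sim C_d\log n$ let me replace the sum by an integral and perform the substitution $t=\log\|\nu\|_\infty$; the resulting $(\log n)^2$ cancels the normalization, leaving $K_d\,\frac1\pi\sum_{\alpha\in L_j\cap(a_j,b_j)}|J_j(\alpha)|$.

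The main obstacle is the reduction in the second step: one must show rigorously that the transverse factors never promote an $O(1)$ contribution to a logarithmically growing one, so that the full Hardy--Krause sum collapses to the single term $u=\{j\}$. This hinges entirely on the jump data being constant along each hyperplane (so that $\partial_{x_k}$ annihilates it) and on $\operatorname{sign}_j$ killing the $\nu_j=0$ modes; if either fails, extra logarithms from the transverse Lebesgue constants would overwhelm the normalization $\log(n)G(n)$. Closely related, and equally in need of care, is justifying $|c_\nu|\sim C/\|\nu\|_\infty$ together with $G(n)\sim C_d\log n$: these are tenable only because the significant coefficients concentrate on the $\nu_j$--axis, a fact that itself must be derived from the codimension--one, transverse--constant jump structure. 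Once these two points are secured, the remainder is a routine transcription of the one--dimensional computation of Lemma 2.2.
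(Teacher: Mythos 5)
Your proposal is correct in outline and, where it diverges from the paper, it is actually the more careful argument. The paper's own proof follows the same skeleton as Lemma 2.2: it writes $V_R(Y_{j,n})$ as the integral over $R$ of the absolute value of the $\partial_{x_j}$-derivative of the normalized sum, splits that derivative into the contribution of $f'$ and explicit hyperplane jump terms, lets the smooth part and the hyperplanes outside $R$ vanish under the normalization $\log(n)G(n)$, and extracts the $\log n$ growth from the jump kernels. You do two things differently. First, you treat the Hardy--Krause variation honestly: you decompose it into Vitali variations over faces indexed by $u\subseteq\{1,\dots,d\}$ and argue that every term with $u\neq\{j\}$ dies because the singular part of $Y_{j,n}$ is constant in the transverse variables; the paper simply identifies $V_R$ with the single term $\int_R|\partial_{x_j}(\cdot)|\,dx$ and never addresses the remaining terms of the Hardy--Krause sum. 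Second, and more importantly, you identify the correct kernel in the jump terms: since the jump measure across $H_{\alpha,j}$ has constant density along the hyperplane, its Fourier data sits on the $\nu_j$-axis, so its conjugate rectangular partial sums involve only the one-dimensional kernel $\widetilde D^{(1)}_m(x_j-\alpha)$, with no transverse Dirichlet factors. The paper instead invokes the product-form kernel $\widetilde D_{j,n}$ and asserts that its integral over $R$ grows like $\log n$; with the product form this is not true --- each transverse factor contributes a Lebesgue constant of order $\log n$, giving $(\log n)^d$ in total and overwhelming the normalization. Your observation that the transverse modes are absent (and that $\operatorname{sign}_j$ annihilates jumps across hyperplanes $H_{\beta,j'}$ with $j'\neq j$) is precisely what repairs this defect, and it also explains, rather than assumes, why the limit depends only on $L_j\cap(a_j,b_j)$. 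Both arguments ultimately rest on the same heuristic asymptotics $|c_\nu|\sim C/\|\nu\|_\infty$ and $G(n)\sim C_d\log n$ and on summation-by-integration, which you rightly flag as the remaining gap; at the paper's level of rigor, your route is at least as complete as the original and structurally sounder.
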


\begin{proof}
The proof parallels the one-dimensional proof in section \ref{res} using the multidimensional conjugate kernels.
Starting from
\begin{equation*}
  V_R(Y_{j,n})
  =
  \int_R
  \left|
  \frac{1}{\log n\,G(n)}
  \sum_{|\nu_k|\le n}
  \partial_{x_j}\widetilde S_{j,\|\nu\|_\infty}(x;f)\,|c_\nu|
  \right|
  dx,
\end{equation*}
one writes the derivative of the conjugate partial sum as the contribution from \(f'\) and the explicit hyperplane jump terms
(using the product-form of the kernel). The contribution from \(f'\) integrates to \(o(1)\), and hyperplanes outside \(R\) contribute \(O(1)\) after integration, hence vanish after normalization. For \(\alpha\in L_j\cap(a_j,b_j)\) the integral of \(\widetilde D_{j,n}\) over \(R\) grows like \(\log n\), and summation with \(|c_\nu|\sim \|\nu\|_\infty^{-1}\) and division by \(\log n\,G(n)\) yields the stated limit.
\end{proof}

\subsection{Consequences}
\begin{itemize}
  \item Each coordinate direction \(j\) independently detects jump hyperplanes orthogonal to \(e_j\).
  \item \(Y_{j,n}\) acts as a hyperplane jump detector, generalizing the 1-D detection of jump points.
  \item Total variation over rectangles yields exactly the sum of jump magnitudes within that strip.
\end{itemize}

\bibliographystyle{amsplain}
\bibliography{fa_refs}
\end{document}